%
%
%
%
\documentclass[10pt]{amsart}


\usepackage{tasks}
\usepackage{hyperref}
\usepackage{blindtext}
\usepackage{setspace}
\usepackage{marginnote}
\usepackage{color}
\usepackage{amsrefs}
\usepackage{graphicx}  
\usepackage[top=3.5cm, bottom=2.5cm, outer=2.5cm, inner=2.5cm]{geometry}
\usepackage{hyperref}
\hypersetup{linkcolor=red,
citecolor=black
}
\usepackage{amsmath}

\newtheorem{thm}{Theorem}[section]
\newtheorem{lem}[thm]{Lemma}
\newtheorem{prop}[thm]{Proposition}
\newtheorem{cor}[thm]{Corollary}

\theoremstyle{definition}

\theoremstyle{remark}
\newtheorem{re}[thm]{Remark}

\numberwithin{equation}{section}

\usepackage[utf8]{inputenc}
\usepackage[english]{babel}
\usepackage{paralist}
\usepackage{amsthm}
\usepackage{mathtools}
 
\theoremstyle{definition}
\newtheorem{definition}{Definition}[section]
 
\theoremstyle{remark}

 \usepackage{bm,amsmath}

\newcommand{\RN}[1]{%
  \textup{\uppercase\expandafter{\romannumeral#1}}%
}

\newcommand{\Rey}{\mathcal{R}e }

\newcommand{\grad}{\nabla}

\allowdisplaybreaks[4]
\begin{document}

\title{On the long time behavior of  time relaxation model of fluids}


\author{Ali Pakzad}
\address{Department of Mathematics, University of California,  Riverside, CA 92521, USA}
\email{alip@ucr.edu}

\maketitle
\setcounter{tocdepth}{1}

\begin{abstract}

The time relaxation model, which is  family of high accuracy turbulence models, has proven to be effective in regularization of Navier–Stokes Equations.  The model belongs to the class of Large Eddy Simulation  models, and  is derived by  adding  a  linear  time  regularization term $\chi u^{\star}$ to the Navier–Stokes Equations.  The  time relaxation operator  truncates small solution scales by injecting an extra dissipation to a simulation,  without altering appreciably the solution's large scales. Herein to evaluate the effect of the time  regularization term on a simulation,  the rate of energy dissipation of the model in body-force-driven  turbulence is studied.  Our result, which agrees with Kolmogorov’s conventional turbulence theory, is also consistent with the rate proven for the NSE. Moreover, employing the model  requires a choice of the coefficient $\chi$. It is known that the model's simulation is sensitive to the parameter.  The analysis motivates a range of possible values for the coefficient $\chi$ in $3d$ turbulent flows away from walls.

\end{abstract}

\section{Introduction}
A distinctive feature of turbulent flows is the emergence of complicated chaotic structures involving a wide range of length scales.  Based on K-41 theory for a  $3 D$ turbulence, \cite{F95} and \cite{P00}, capturing all these scales typically requires $\mathcal{O} (\Rey^{\frac{9}{4}})$ mesh points in space per time step for a direct numerical simulation of the  Navier–Stokes equations ($\chi = 0$ in (\ref{TRM0})).  Such calculations  are infeasible for practical problems at even modest Reynolds number. On the other hand, using a coarse discretization $\simeq \mathcal{O}(\delta)$ can  lead to the non-physical temporal growth of the fluctuations   due to  neglecting the dissipation that occurs at very small scales (smaller than the typical coarse mesh).  To \textit{relax} these difficult discretization requirements, several numerical regularization techniques have been developed for simulations. Time relaxation models (TRM), which were  introduced by Stolz, Adam and Kleiser in \cite{SAK01-1} and \cite{SAK01-2},  are a novel class of regularization of the Navier–Stokes equations (NSE).  The model is accomplished by adding a time relaxation operator, as a numerical regularization, to the momentum equation of the NSE,
\begin{equation}\label{TRM0}
u_t+  \nabla \cdot (u \otimes u) -  \nu \Delta u + \chi u^{\star} +  \nabla p = f(x),
\end{equation}
where $u$ represents the fluid velocity, $p$ is the pressure, $\nu$ is kinematic viscosity, and $f$ accounts for external forcing. In (\ref{TRM0}) $u^{\star}$ is a generalized fluctuation  over length scales less than $\mathcal{O}(\delta)$,  and $ \chi > 0$ is  the scaling parameter which  has the units $[\mbox{time}]^{-1}$. Broadly speaking, $ \chi u^{\star}$ is intended  to strongly damp the non-physical unresolved fluctuations $< \mathcal{O}(\delta)$, without altering appreciably the solution's large scales $\geq\mathcal{O}(\delta)$. Numerical  experience with the model also  indicates a significant improvement over classical subgrid scale models   with a lower computational cost (e.g. \cite{ELN07}, \cite{SAK01-1} and  \cite{SAK01-2}).

On the other hand,  turbulence models seek to predict flow statistics (long time averages)   instead of  individual trajectories (\cite{H72}, \cite{MW06} and \cite{W10}). Indeed, much of the classical turbulence theories, such as the famous Kolmogorov’s conventional turbulence theory, are presented in the statistical forms (\cite{FMRT01} and \cite{F95}).  One quantity of great interest in applications and  importance in the study of the statistical properties  for turbulent flows (in the sense of J. Leray) is  the time averaged energy dissipation rate \cite{P00}. In this paper, we consider the Time Relaxation Model to calculate  statistics of the energy dissipation rate  of the large eddies in the  turbulent fluid in $3D$ in the absence of boundaries. 
 
 \subsection{Related Works}
 The energy dissipation rate is a fundamental statistic in experimental and theoretical studies of turbulence (\cite{F95} and \cite{P00}). Recently, there has been significant progress in deriving bounds on the time-averaged energy dissipation rate for turbulent flows  for incompressible homogeneous Newtonian fluids.  Upper bounds at every instant of
time yield estimates on the small length scales in the solutions (Wang \cite{W00}).  Kolmogrov first argued that at large Reynolds number, the energy dissipation rate per unit volume should be independent of the kinematic viscosity. Based on the concept of the energy cascade, and by a dimensional consideration, the energy dissipation rate per unit volume must take the form constant times $\frac{U^3}{L}$ (Frisch \cite{F95}), where $U$ and $L$ are global velocity and length scales. 

Doering and Constantin \cite{DC92} first established a rigorous upper bound for the time averaged energy dissipation rate for shear flows directly from the NSE.  Similar estimations have been proven by Marchiano \cite{M94}, Wang \cite{W97} and Kerswell \cite{K97} in more generality.  The result of Doering and Constantin has been also  generalized to other turbulence models in LES by Pakzad \cite{AP16} and Layton \cite{L02}.  
The effect of the mesh size on turbulence statistics was studied in \cite{AP18} for  discretized flow equations. 

 In non-equilibrium steady state the rate of energy dissipation must be balanced by the rate of work done by external forces to the system (Doering and Gibbon \cite{DG95}).   For  body-force-driven steady-state turbulence  Doering and  Foias \cite{DF02} delineated  bounds on the bulk rate of energy dissipation directly from the NSE, 
 $$\langle \varepsilon \rangle \leq (1+ \Rey^{-1}) \frac{U^3}{L}.$$
 Their result  has been extended to  other turbulence models and regularizations  in \cite{DLPRSZ18}, \cite{L16} and \cite{LRS10}.
 
On the other hand, a  model's performance depends on the choices for non-physical quantities like the relaxation coefficient.  It has been observed that the perfromance is  sensitive to the parameter  \cite{NPW15}.
In \cite{CL10} optimizing the error in discrete deconvolution suggests the  scaling  $\chi \simeq \delta^{-2}$.  After developing a similarity theory for the Time Relaxation model  following the $K-41$ theory of the Navier-Stokes equations, Layton and Neda \cite{LN07} proposed scaling $\chi \simeq \delta^{-\frac{2}{3}}$ by combining a mix of physical insight, mathematical analysis and dimensional analysis. 
 
 We begin in Section \ref{Section2}  where we briefly introduce basic notations and preliminaries,  and give a precise definition of the averaging operator and the higher-order approximate deconvolution that are used to define the generalized fluctuation $u^{\star}$. Section \ref{Section3} gives the analysis calculating the energy dissipation of the model,  and the major results are proven. We propose, based on an analysis
of the energy dissipation, a narrowing of the commonly accepted ranges of parameter $\chi$;  results are summarized below.   Section \ref{Section4}  collects conclusions and open problems.

\subsection{Summary of Results}
For body force driven turbulence, we prove  the following bounds on the  time-averaged energy dissipation rate  $\langle \varepsilon \rangle$ directly from the model,

$$\langle \varepsilon \rangle \leq  \bigg(2+ \Rey^{-1} + \frac{\chi \delta^2}{U L}\bigg) \, \frac{U^3}{L},$$
where $U, L$ are global velocity and length scales, respectively, and $\delta$ is the large eddy simulation filter radius.  In this  estimate  $\langle \varepsilon \rangle$ balances the energy input rate, $\frac{U^3}{L}$. This estimate is also consistent as $\Rey \rightarrow \infty ,  \delta\rightarrow 0$,   and $\chi\rightarrow 0$ with both phenomenology, e.g.,  \cite{F95}, \cite{P00}  and \cite{L07}, and the rate proven for the Navier-Stokes equations in \cite{DC92}, \cite{DF02},  \cite{M94} and \cite{W97}.  On the other hand, the upper bound being independent of the viscosity at high Reynolds number  is in accord with the Kolmogrov's conventional turbulence theory.

This estimate gives insight into $\chi$ by asking model's dissipation, $\frac{\chi \delta^2}{U L}$, be comparable to the pumping rate of energy to small scales by the nonlinearity, $2 \, \frac{U^3}{L}$ , and to the correction to the asymptotic, $\Rey \rightarrow \infty$, rate due to energy dissipation in the inertial range, $\Rey^{-1}\, \frac{U^3}{L}$. The comparison suggests the following range  for $\chi$,

$$\Rey^{-1} \, \frac{UL}{\delta^2} \leq \chi \leq 2 \, \frac{UL}{\delta^2}  \mbox{\hspace{50pt} mesh independent case.}$$

In large eddy simulation (LES)  the smallest scale available is $h$,  when the model is solved on a spacial mesh with mesh-width $h$.  On the other hand, Kolmogorov dissipation micro-scale, which determines the size of the smallest persistent solution scales, is $\Rey ^{-\frac{3}{4}} \, L $.  Hence, one can estimate $  h \simeq  \Rey ^{-\frac{3}{4}} \, L$. Morover, the scale $\delta$ is in general chosen to be of the order of the mesh size $h$ in a practical computation.     In  other words,  success for a turbulence simulation  minimally requires that $\delta = h =  \Rey ^{-\frac{3}{4}} \, L$. Therefore the following estimate of mesh dependence case can be derived,

$$  \frac{U}{L}\, (\frac{L}{h})^{\frac{2}{3}}\leq \chi \leq 2 \frac{U}{L}\, (\frac{L}{h})^{2}  \mbox{\hspace{46pt} mesh dependent case.}$$
Note that in both cases $\chi \rightarrow \infty$ as $\delta$ and $h \rightarrow 0$ which is consistent with results shown in \cite{LN07}.

\section{Preliminaries}\label{Section2}

This section is  devoted to standard  definitions and notations.  We restrict ourselves to what we need for our usage and we skip proofs and technical details. Throughout this article, the $L^2(\Omega)$ norm and inner product will be denoted by $\|\cdot\|$ and $(\cdot , \cdot)$. Likewise, the $L^{p}(\Omega)$ norms are denoted by $\|\cdot\|_{p}$.  $\grad u$ is the gradient tensor, $(\grad u)_{ij} =\frac{\partial u_j}{\partial x_i}$,  for $i, j= 1, 2 , 3$.
 \subsection{Differential Filter and LES}
 In any turbulent flow, it is expected that large  scales of motion contain the bulk of a flow's kinetic  energy, and account for most of the momentum transport \cite{P00}.   Large Eddy Simulations (LES) aim to  compute only large flow structures (larger than the filter width $\delta$). This can be accomplished by  removing the  small flow scales from the solution by  a spatial low-pass filtering. Accordingly,   the mean effects of these  small scales' random character  on the large eddies has to be modeled.  To introduce any LES model (the time relaxation model here),  a local spacial averaging operator associated with a length-scale $\delta$ must be selected, and many are possible. These are well documented in the literature,    e.g.,  \cite{BIL06} and \cite{J04}.  We chose a continuous differential filter, Germano \cite{G86},  as the follows.

Given an $L-$periodic $\phi(x) \in L^2 (\Omega)$ and a filtering redius of $\delta >0 $, its average $\overline{\phi}$ is the unique $L-$ periodic solution of the PDE,

\begin{equation} 
\begin{split}
- \delta^2 \triangle \overline{\phi} + & \overline{\phi} = \phi  \hspace{10pt}  \hspace{10pt} \mbox{in}\,\, \Omega, \\
& \overline{\phi} = \phi \hspace{20pt} \mbox{on}\,\, \partial \Omega.
\end{split}
\end{equation}
The filter size $\delta$  is in general chosen to be of the order of the mesh size $h$ in a practical computation \cite{BIL06}. This filtering operation is often denoted  $\overline{\phi} = G \phi$ where $G = (I - \delta^2 \triangle)^{-1}$. It is important in many applications to obtain the unfiltered solution from the filtered solution.  However, the filter  is non-regular because its inverse is unbounded \cite{SA99},  an approximate inverse can be obtained.   From here, the basic problem is: given $\overline{\phi}$   find \textit{useful} approximations of $\phi$. Stolz and Adams in \cite{SA99} proposed a method (ADM) based on a repeated application of the filter to approximately deconvolve the filtered
solution and they  applied this model successfully for the LES.

\subsection{Approximate de-convolution Model}   The de-convolution problem becomes, 
$$\mbox{Given} \hspace{4pt} \overline{\phi}, \hspace{10pt}  \mbox{solve} \hspace{4pt}  G\phi = \overline{\phi} \hspace{10pt} \mbox{for} \hspace{4pt} \phi.$$
It is central in both image processing  \cite{BB98} and turbulence modeling in large eddy simulation \cite{G97}.   $G$ is not invertible or at least not stably invertible due to small divisor problems. Thus, this de-convolution problem is ill posed. Hence finding an appropriate approximation becomes needful for the applications.

The \textit{van Cittert algorithm},  was first  used for image reconstruction by van Cittert in 1931,  is a well-known procedure in regularizing ill-posed problems. Consider a filter $G$ and a filtered function $\overline{\phi}$. Assign $\phi_0 = \overline{\phi}$, then for $n = 0, 1, 2, ..., N-1$ perform the following fixed-point iteration,

$$\phi_{n+1} = \phi_n + \{\overline{\phi} - G \phi_n\}.$$
This is the first order Richardson iteration for the operator equation
$G\phi = \overline{\phi}$ involving a possibly noninvertible operator $G$. For each $N = 0, 1, . . .$,   the algorithm computes an approximate solution $\phi_N$ to the above de-convolution equation by $N$
steps of a fixed-point iteration. Since the de-convolution problem is ill
posed, convergence as $N \rightarrow \infty$ is not expected.

\begin{definition} The Nth van Cittert approximate deconvolution operator $G_N: L^2 (\Omega) \rightarrow L^2 (\Omega)$ is defined as, 

$$G_N (\overline{\phi}) \coloneqq \phi_N.$$ 
We then can rewrite, 
$$G_N G \phi = \phi_N.$$
\end{definition}
The algorithm can be simplified to obtain an explicit formula for the  $N^{\mbox{th}}$ de-convolution operator $G_N$,

$$G_N \phi  = \sum_{n=0}^{N} (I - G)^n \phi.$$
The (bounded) operator $G_N$ is an approximation to the (unbounded)
inverse of the filter $G$ in the  following sense, $G_N \simeq G^{-1}$.

\begin{lem} \label{lem1}(Error in approximate de-convolution)
For any $\phi \in L^2 (\Omega),$

\begin{equation}
\begin{split}
 \phi - G_N \overline{\phi} &= \big( (-1)^{N+1} \Delta^{N+1} \delta^{2N+2}\big) G^{N+1} \phi\\ 
 &= \mathcal{O} (\delta ^{2N+2}) \hspace{50pt} \mbox{as} \hspace{5pt}  \delta \rightarrow 0.
 \end{split}
\end{equation}
\end{lem}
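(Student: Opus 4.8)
The plan is to work directly from the explicit formula $G_N\phi = \sum_{n=0}^N (I-G)^n \phi$ together with the defining relation of the filter $G = (I-\delta^2\Delta)^{-1}$. The key algebraic observation is that $I - G = I - (I-\delta^2\Delta)^{-1} = (-\delta^2\Delta)(I-\delta^2\Delta)^{-1} = (-\delta^2\Delta)\, G$, so that $I - G$ is a single operator built from $\Delta$ and $G$ that commute with one another. Writing $A := I - G = (-\delta^2\Delta)G$, the deconvolution operator is the truncated Neumann-type series $G_N = \sum_{n=0}^N A^n$, which telescopes against $(I-A)$: explicitly $(I-A)\sum_{n=0}^N A^n = I - A^{N+1}$. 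Since $I - A = G$, this reads $G\, G_N = I - A^{N+1}$, hence $G_N \overline{\phi} = G_N G\phi = \phi - A^{N+1}\phi$ for every $\phi\in L^2(\Omega)$. Therefore $\phi - G_N\overline{\phi} = A^{N+1}\phi = \big((-\delta^2\Delta)G\big)^{N+1}\phi$, and because $\Delta$ and $G$ commute this is $(-1)^{N+1}\delta^{2N+2}\Delta^{N+1}G^{N+1}\phi$, which is exactly the claimed identity.

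First I would record the two preliminary facts and make sure they are legitimate on $L^2(\Omega)$ with $L$-periodic boundary conditions: that $G = (I-\delta^2\Delta)^{-1}$ is a well-defined bounded operator (standard elliptic theory / Fourier series on the torus), and that $G$ commutes with $\Delta$ (immediate from the functional calculus, or from $\Delta(I-\delta^2\Delta) = (I-\delta^2\Delta)\Delta$ followed by conjugation). Next I would derive $I - G = (-\delta^2\Delta)G$ by multiplying the identity $I = (I-\delta^2\Delta)G$ through and rearranging. Then I would state the finite geometric-series telescoping identity $(I-A)\sum_{n=0}^N A^n = I - A^{N+1}$, valid for any bounded operator $A$, apply it with $A = I-G$ so that $I - A = G$, and substitute $\overline\phi = G\phi$ to cancel the $G$ on the left. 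The final step is just bookkeeping with the commutation relation to pull the scalars $(-\delta^2)^{N+1}$ out and separate $\Delta^{N+1}$ from $G^{N+1}$. The asymptotic statement $\mathcal{O}(\delta^{2N+2})$ as $\delta\to 0$ then follows once one notes $\|G\|_{L^2\to L^2}\le 1$ and $\Delta^{N+1}\phi\in L^2$ is fixed (or, more carefully, one keeps $G^{N+1}\phi$ inside and uses $\|G^{N+1}\|\le 1$ together with $\|\Delta^{N+1}\phi\|<\infty$ for $\phi$ smooth enough; for general $\phi\in L^2$ the identity is exact and the order symbol is understood in the sense of the stated operator expression).

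Honestly, there is no serious obstacle here: the entire content is the telescoping of a finite operator geometric series, and the only thing one must be slightly careful about is that all the operators in sight genuinely commute and are bounded on the periodic $L^2$ space, so that the manipulations $(I-A)\sum A^n = I - A^{N+1}$ and the rearrangement of $((-\delta^2\Delta)G)^{N+1}$ into $(-1)^{N+1}\delta^{2N+2}\Delta^{N+1}G^{N+1}$ are justified. If one wanted to be maximally careful about the $\mathcal O(\delta^{2N+2})$ claim one should either restrict to $\phi$ in a suitable Sobolev space $H^{2N+2}$ so that $\Delta^{N+1}\phi \in L^2$ with $\delta$-independent norm, or interpret the order symbol as applying to the displayed exact formula; I would add a one-line remark to this effect rather than belabor it.
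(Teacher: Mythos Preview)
Your argument is correct and is the standard derivation: the paper itself does not give a proof of this lemma but simply refers the reader to \cite{BIL06}, where exactly this telescoping geometric-series computation (using $I-G=(-\delta^2\Delta)G$ and $(I-A)\sum_{n=0}^N A^n=I-A^{N+1}$) is carried out. Your remark that the $\mathcal{O}(\delta^{2N+2})$ statement should be read either for sufficiently smooth $\phi$ or as shorthand for the exact operator expression is appropriate and matches the usage in that reference.
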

\begin{proof}
See \cite{BIL06}.
\end{proof}

\subsection{Time Relaxation Model} Time Relaxation Models (\ref{TRM}) were introduced by Stolz, Adams and Kleiser in \cite{SAK01-1} and \cite{SAK01-2}. The model's solutions $u(x,t)$ are intended to approximate the true flow averages. Accordingly,  the effect of nonrepresented scales is modeled by a relaxation regularization involving a repeated filter operation $G_N$ and a dynamically estimated relaxation parameter $\chi$.  Considering an incompressible flow in  a periodic box $\Omega = (0,\ell)^3$, the resulting models are given by:

\begin{equation} \label{TRM}
\begin{split}
u_t+ u \cdot \nabla u -\nu \Delta u + & \nabla p + \chi (u - G_N \overline{u}) = f(x)\hspace{10pt} \mbox{and}\hspace{10pt} \grad \cdot u =0 \hspace{10pt} \mbox{in}\,\, \Omega, \\
& u(x,0)=u_0(x) \hspace{10pt} \mbox{in}\,\, \Omega,
\end{split}
\end{equation}
periodic boundary conditions are imposed, 

\begin{equation} \label{BC}
u(x+\ell e_j,t)=u(x,t)\hspace{10pt}\mbox{for any }\hspace{10pt} j=1,2,3, 
\end{equation}
the data $u_0(x)$ and  $f(x)$ are smooth, $\ell$ -periodic and divergence free. We restrict attention to mean-zero body forces and initial conditions so the velocity remains mean-zero for all $t>0$,

\begin{equation}\label{DataConditions} 
\begin{split}
&\hspace{5pt}\grad \cdot f= 0 \hspace{12pt}\mbox{and  }\hspace{12pt} \grad \cdot u_0=0, \\
& \int_{\Omega} \kappa \,dx =0 \hspace{10pt} \mbox{for any }  \hspace{10pt} \kappa = u, u_0, f, p.
\end{split}
\end{equation}
Existence, uniqueness  and regularity of strong solutions are described in  \cite{BL12} and \cite{LN07}.  The term $u - G_N \overline{u} = (I - G_N G)u$ was devised to inject extra energy dissipation to the computed solution of the unregularized NSE, and derive fluctuations below $\mathcal{O}(\delta)$ to zero  exponentially fast as $t \rightarrow \infty$ without altering the dominant scales $> \mathcal{O}(\delta)$.  It is shown in \cite{LN07} that the fluctuations below $\mathcal{O}(\delta)$ must $\rightarrow 0$ in $L^2(\Omega \times (0,T))$ as $\chi \rightarrow \infty$.

The convergence of the Finite Element discretization of the TRM is presented in \cite{ELN07} along with a numerical study which shows that the time relaxation term does not alter shock speeds in the inviscid compressible case. A better performance of the TRM is reported in the study of the flow past a full step problem  in  \cite{DN14} and \cite{ELN07}.  The model also performed extremely well in a posteriori tests for incompressible channel flow \cite{SAK01-2}. Adams \textit{et al.}  have performed extensive computational tests of the time relaxation model on the compressible flows with shocks  in \cite{SAK01-1}, and on the   compressible decaying isotropic turbulence in \cite{SA99}. A significant improvement over established subgrid scale models are reported in all of these works.

\subsection{Energy Estimates}
To start a standard energy calculation, multiply (\ref{TRM}) by $u$, integrate over the domain $\Omega$  and then integrate with respect to time from $0$ to $t$. We have the following proposition on the existence and uniqueness of the weak  and strong solutions. 

\begin{prop}
Let $u_0 \in L_0^2 (\Omega), f \in L^2 (\Omega \times (0,T))$ and $\int_{\Omega} f \, dx = 0$. There exists a weak solution to (\ref{TRM}). The solution is unique if it is additionally a strong solution. Moreover, if $u$ is a strong solution it satisfies the energy equality:

\begin{equation} \label{EnergyEq}
\frac{1}{2} \|u(t)\|^2 + \int_0^t \int_{\Omega} \nu |\nabla u|^2 + \chi (u - G_N \overline{u}) \cdot u \, dx \, dt' =  \frac{1}{2} \|u_0\|^2 + \int_0^t \int_{\Omega} f \cdot u \, dx \, dt'.
\end{equation}
\end{prop}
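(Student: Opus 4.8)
The plan is to obtain existence by the Faedo--Galerkin method, uniqueness among strong solutions by a Gr\"onwall argument, and the energy equality by testing the strong solution against itself; alternatively, existence, uniqueness and regularity can simply be quoted from \cite{BL12} and \cite{LN07} and only the energy equality recorded here. The one structural fact driving all three points is that the time relaxation operator $I - G_N G$ is bounded, self-adjoint and \emph{nonnegative} on $L_0^2(\Omega)$. To see this, diagonalize in the $\ell$-periodic Fourier basis: the filter $G = (I-\delta^2\Delta)^{-1}$ has symbol $g_k = (1+\delta^2|k|^2)^{-1}\in(0,1]$, so the explicit formula $G_N = \sum_{n=0}^N (I-G)^n$ gives $G_N G$ the symbol $1-(1-g_k)^{N+1}$ and hence $I-G_N G$ the symbol
\begin{equation*}
(1-g_k)^{N+1} = \Big(\frac{\delta^2|k|^2}{1+\delta^2|k|^2}\Big)^{N+1}\in[0,1).
\end{equation*}
In particular $\big((I-G_N G)v,v\big)\ge 0$ and $\|(I-G_N G)v\|\le\|v\|$ for every $v\in L_0^2(\Omega)$.

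First I would fix the orthonormal basis $\{\varphi_k\}$ of $\ell$-periodic, divergence-free, mean-zero $L^2$ functions made of Stokes eigenfunctions, set $X_m=\mathrm{span}\{\varphi_1,\dots,\varphi_m\}$, and solve the Galerkin system for $u^m(t)\in X_m$; since the relaxation term is an affine bounded map, the projected ODE has a local solution. Testing with $u^m$ annihilates the pressure and the nonlinearity ($(u^m\cdot\nabla u^m,u^m)=0$ by incompressibility and periodicity), and nonnegativity of $I-G_N G$ together with Young's inequality on $(f,u^m)$ yields
\begin{equation*}
\tfrac12\|u^m(t)\|^2 + \nu\!\int_0^t\!\|\nabla u^m\|^2\,dt' \;\le\; \tfrac12\|u_0\|^2 + \tfrac{1}{2\nu}\!\int_0^t\!\|f\|_{V'}^2\,dt',
\end{equation*}
uniformly in $m$; this bound makes the Galerkin solution global and bounded in $L^\infty(0,T;L_0^2)\cap L^2(0,T;V)$. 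The equation then bounds $\partial_t u^m$ in $L^{4/3}(0,T;V')$ (the relaxation term lies in $L^2(0,T;L^2)$, controlled by $\chi\|u^m\|$), so Aubin--Lions--Simon produces a subsequence converging strongly in $L^2(0,T;L^2)$, which is exactly what passes the nonlinear term to the limit; the linear relaxation term passes trivially. This yields a weak solution.

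For uniqueness, let $u$ be a weak solution and $v$ a strong solution with the same data, put $w=u-v$, and test the difference equation with $w$ (legitimate because $v$ is regular enough). The relaxation contribution is $\chi\big((I-G_N G)w,w\big)\ge 0$, so it only helps; $(w\cdot\nabla v,w)$ is absorbed into $\nu\|\nabla w\|^2$ using $\|v\|_{L^\infty}$, which is available for a strong solution, and Gr\"onwall with $w(0)=0$ forces $w\equiv 0$. Finally, the energy equality is the same computation performed as an identity: a strong solution has $u\in L^2(0,T;V)$ and $\partial_t u\in L^2(0,T;V')$, so $t\mapsto\|u(t)\|^2$ is absolutely continuous with $\tfrac{d}{dt}\tfrac12\|u\|^2=\langle\partial_t u,u\rangle$, and pairing the PDE with $u$, integrating in $t$, and using $(\nabla p,u)=0$ and $(u\cdot\nabla u,u)=0$ gives exactly \eqref{EnergyEq}.

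The main obstacle is bookkeeping rather than depth: one must verify carefully that the Fourier-symbol computation is valid on the periodic box so that $I-G_N G$ is genuinely self-adjoint, nonnegative and a contraction, since that is what lets the relaxation term ride along through both the Galerkin limit and the Gr\"onwall estimate without any loss of control, and one must be careful that the energy \emph{equality} (not merely an inequality) holds precisely because a strong solution is regular enough to be its own test function. The remainder is the classical Leray--Hopf construction for the $3d$ Navier--Stokes equations with a bounded linear perturbation added.
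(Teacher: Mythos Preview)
Your sketch is correct and in fact goes well beyond what the paper does: the paper's own ``proof'' consists of a single line, ``See \cite{LN07}.'' So there is nothing to compare at the level of argument---the paper simply outsources existence, uniqueness, and the energy equality to that reference, which is exactly the alternative you mention in your first sentence. Your Faedo--Galerkin construction, the Fourier-symbol verification that $I-G_NG$ is a bounded self-adjoint nonnegative contraction on the periodic box, the Aubin--Lions compactness, the Gr\"onwall uniqueness argument, and the justification of the energy \emph{equality} for strong solutions are all standard and correctly laid out; what you have written is essentially the content one would find by following the citation.
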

\begin{proof}
See \cite{LN07}.
\end{proof}

\begin{re}
Weak solutions satisfies the energy inequality  which $''=''$ replaced by $''\leq''$ in (\ref{EnergyEq}). 
\end{re}

Since the operator $(I - G_N G)$ is Hermition and symmetric Positive Definite \cite{SAK01-2}, the relaxation term is purely dissipative.  Consider the operator $B: L^2 (\Omega) \rightarrow L^2 (\Omega)$ satisfying:
\begin{equation}\label{OperatorB}
B^2 \phi  \coloneqq \delta^{-(2N+2)} (I - G_N G) \phi = \delta^{-(2N+2)} (\phi - G_N \overline{\phi}).
\end{equation}
Hence $B = \delta ^{-(N+1)} \sqrt{(I - G_N G)}$ is well-defined, positive and bounded. Moreover we have,  

\begin{equation}\label{B-Operator}
(\phi - G_N \overline{\phi}, \phi) = \delta ^ {2N+2} (B \phi, B \phi) = \delta ^ {2N+2} \| B \phi \|^2.
\end{equation}

Because   $I - G_N G$ is a positive definite operator, considering the energy equality  (\ref{EnergyEq}), the model's relaxation term $\chi (u - G_N \overline{u})$ extracts energy from resolved scales and dissipate through time scales of motion.  Thus  the model energy dissipation rate (per unit volume)  includes dissipation due to the viscous forces and the model's diffusion which is given by, 

$$\varepsilon = \varepsilon_0 + \varepsilon_M,$$
where,
$$ \varepsilon_0  \coloneqq \frac{1}{|\Omega|} \int_{\Omega} \nu |\nabla u|^2   =  \frac{1}{|\Omega|}\,  \nu \|\nabla u\|^2 ,$$
and,
$$ \varepsilon_M \coloneqq \frac{1}{|\Omega|}  \int_{\Omega} \chi (u - G_N \overline{u}) \cdot u  = \frac{1}{|\Omega|}\, \chi \, \delta^{2N+2} \, \|Bu\|^2. $$
We will consider time-averaged quantity using the notation,
$$\langle\psi(\cdot)\rangle \coloneqq  \limsup\limits_{T\rightarrow\infty}  \, \frac{1}{T} \int_{0}^{T} \psi(t)\, dt.$$
Thus the time-averaged energy dissipation rate for (\ref{TRM}) is,

$$\langle\varepsilon \rangle=  \limsup\limits_{T\rightarrow\infty}  \frac{1}{|\Omega|}\, \frac{1}{T} \int_{0}^{T}   \nu\|\nabla u\|^2  + \chi \, \delta^{2N+2} \, \|Bu\|^2\, dt.$$

\begin{re}\label{Remark1}
Using Poincare’s inequality, together with the Cauchy–Schwarz and Gr{\"o}nwall’s inequalities in (\ref{EnergyEq}) imply that the kinetic energy is uniformly bounded in time,
$$\sup_{t \in (0, \infty)} \|u (t)\|^2 \leq C\, (\mbox{data}) < \infty,$$
and it follows that, 
$$   \frac{1}{T} \int_0^T \varepsilon (u) \, dt  \leq C \, (\mbox{data})  < \infty,$$
which means $\langle \varepsilon (u)\rangle$ is well-defined. 
\end{re}
\subsection{Dimensionless Numbers} To study the time relaxation model precisely, it is critical to find the model's equivalent of the large scales' Reynolds number of the Navier-Stokes equations. The Reynolds number for the Navier-Stokes equations is the ratio of non-linearity (inertia) to viscous (friction) terms action on the largest scales, 
$$\Rey \simeq \frac{|u \cdot \nabla u|}{|\nu \Delta u|} \simeq \frac{ U \frac{U}{L}}{\nu \frac{U}{L^2}} = \frac{UL}{\nu}.$$
 The ratio of non-linearity to dissipative effects should be the analogous quantity. Since the time relaxation term acts to dissipate energy, the new quantity should correspond to, 
$$R_N \simeq \frac{|u \cdot \nabla u|}{ |\chi (u - G_N \overline{u})|}.$$
Proceeding analogously, Layton and Neda \cite{LN07} proposed the following dimensionless parameter for the model.  This derivation is under the assumption that viscous dissipation is negligible compared to dissipation due to time relaxation. Using Lemma \ref{lem1} and the fact that for large scales $(\frac{\delta}{L})^2 \ll 1$, we have,  

{\begin{equation}
\begin{split}
R_N \simeq \frac{|u \cdot \nabla u|}{ |\chi (u - G_N \overline{u})|} & = \frac{|u \cdot \nabla u|}{|\chi ( I -  [- \delta ^2 \Delta + I]^{-1})^{N+1} u |}\\
& =  \frac{|u \cdot \nabla u|}{|\chi \delta^{2N+2} \Delta^{N+1} [- \delta ^2 \Delta + I]^{-(N+1)} u  |}\\
& \simeq \frac{U \frac{U}{L}}{ \chi \delta^{2N+2} (\frac{1}{L^2})^{N+1} [\frac{\delta^2}{L^2} +1]^{-(N+1)} U}\\
& \simeq \frac{ U L^{2N+1}}{\chi \, \delta^{2N+2}}.
\end{split}
\end{equation}

\begin{definition}\label{Def-RN}
The dimensionless time relaxation parameter $R_N$ for the time relaxation model (\ref{TRM}) is,
$$R_N \coloneqq \frac{ U L^{2N+1}}{\chi \, \delta^{2N+2}}.$$
\end{definition}
\section{Energy dissipation rate estimates}\label{Section3}

With $|\Omega|$ the volume of the flow domain, the scale of the body force, large scale velocity,  and length, $F, U, L$, are defined as,

\begin{equation} \label{Scales}
\begin{split}
&F\coloneqq \langle\frac{1}{|\Omega|} \|f\|^2\rangle^{\frac{1}{2}},\\
&U\coloneqq \langle\frac{1}{|\Omega|} \|u\|^2\rangle^{\frac{1}{2}},\\
&L\coloneqq \min \big\{|\Omega|^{\frac{1}{3}} , \, \frac{F}{\langle\frac{1}{|\Omega|} \|\grad f\|^2\rangle^{\frac{1}{2}}},\, \frac{F}{\|\grad f\|_{L^{\infty}(0,T;L^{\infty}(\Omega))}},\, \frac{F^{\frac{1}{N+1}}}{\langle \frac{1}{|\Omega|} \|Bf\|^2 \rangle^{\frac{1}{2N+2}}}\big\}.
\end{split}
\end{equation}
One can show that $U, F$ and $L$ have units of [length $\times$ time$^{-1}$],  [mass $\times$ length  $\times$ time$^{-2}$] and [length] respectively for fixed $N$. For example, recalling (\ref{OperatorB}), we have 

$$|Bf|^2 =|\delta^{-2N-2} (I - G_N G) f^2| = |\delta^{-2N-2} \, \delta^{2N+2} \frac{1}{L^{2N+2}} (\frac{\delta^2}{L^2} +1)^{-(N+1)} F^2 | ,$$
since $ \frac{\delta^2}{L^2}\ll 1$ for the large scales, then $(\frac{\delta^2}{L^2} +1)$ is $\mathcal{O}(1)$. From here, it is easy to see that  $\langle \frac{1}{|\Omega|} \|Bf\|^2 \rangle^{\frac{1}{2N+2}}$ has the same units as $F^{\frac{1}{N+1}} \, L^{-1}$,  which shows that the fourth element of the length scale has units of length. Therefore  $L$ has units of length and satisfies, 

\begin{equation} \label{Scales2}
\begin{split}
&{\langle\frac{1}{|\Omega|} \|\grad f\|^2\rangle^{\frac{1}{2}}} \leq \frac{F}{L},\\
&{\|\grad f\|_{L^{\infty}(0,T;L^{\infty}(\Omega))}} \leq \frac{F}{L},\\
&{\langle \frac{1}{|\Omega|} \|Bf\|^2 \rangle^{\frac{1}{2}}} \leq \frac{F}{L^{N+1}}.
\end{split}
\end{equation}

\begin{thm}
Let $u(x,t)$ be a mean-zero solution of the Time Relaxation Model (\ref{TRM}) with the periodic boundary conditions (\ref{BC}) and the data conditions (\ref{DataConditions}). Then the time averaged energy dissipation rate per unit mass satisfies,

$$\langle \varepsilon (u) \rangle \leq \bigg( 2 + \frac{1}{\Rey}  + \frac{1}{R_N} \bigg)\, \frac{U^3}{L},$$
where $U$ and $L$ are defined in (\ref{Scales}) and $R_N$ is defined by, 
$$R_N =  \frac{ U L^{2N+1}}{\chi \, \delta^{2N+2}}.$$
\end{thm}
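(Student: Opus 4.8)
The plan is to follow the now-classical Doering--Foias strategy adapted to the extra relaxation term. First I would start from the energy equality (\ref{EnergyEq}), divide by $T$, and take $\limsup_{T\to\infty}$. Since the kinetic energy is uniformly bounded in time (Remark \ref{Remark1}), the boundary term $\frac{1}{2T}(\|u(T)\|^2-\|u_0\|^2)\to 0$, and we obtain the basic balance
\begin{equation*}
|\Omega|\,\langle \varepsilon(u)\rangle = \nu\,\langle\|\grad u\|^2\rangle + \chi\,\delta^{2N+2}\langle\|Bu\|^2\rangle = \langle (f,u)\rangle.
\end{equation*}
So everything reduces to bounding $\langle (f,u)\rangle$ from above by $(2 + \Rey^{-1} + R_N^{-1})\,\frac{U^3}{L}|\Omega|$.

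The key trick is to \emph{not} estimate $(f,u)$ crudely by $\|f\|\|u\|$, but instead to subtract off a suitable multiple of the \emph{filtered/mollified} force. Concretely, I would write $(f,u) = (f-\phi,u) + (\phi,u)$ where $\phi$ is chosen so that $(\phi,u)$ can be re-expressed using the PDE (\ref{TRM}) tested against $\phi$, and $(f-\phi,u)$ is small. A natural choice, given the structure of the length scale $L$ in (\ref{Scales}), is to take $\phi = $ an average of $f$ over the box, or more precisely to exploit that $f$ is mean-zero and use the Poincar\'e-type gains encoded in the three inequalities (\ref{Scales2}); another standard device is the "background flow" $\phi(x)$ or simply using $\langle (f,u)\rangle$ and estimating $u$ in terms of $\grad u$ and $Bu$. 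The cleanest route: bound $(f,u) \le \|f\|\,\|u\|$ is too lossy, so instead use $\langle (f,u)\rangle \le F\,U\,|\Omega|$ directly from Cauchy--Schwarz and the definitions of $F,U$ in (\ref{Scales}), giving $|\Omega|\langle\varepsilon\rangle \le F U |\Omega|$, i.e. $\langle\varepsilon\rangle \le FU$. Then the real work is to bound $F$ in terms of $U/L$ and the dissipation itself, by testing the equation against $f$ (or against a primitive of $f$) and using (\ref{Scales2}) together with the definitions of $\Rey$ and $R_N$.

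More precisely, I would test (\ref{TRM}) with $f$, integrate in space and time-average: the time-derivative term gives $\langle \frac{d}{dt}(u,f)\rangle$ which vanishes (bounded quantity divided by $T$), the nonlinear term gives $\langle(u\cdot\grad u, f)\rangle$ which is controlled by $\|u\|\,\|u\|_{?}\,\|\grad f\|_\infty \lesssim U^2 \cdot \frac{F}{L}|\Omega|$ using the $L^\infty$ bound on $\grad f$ in (\ref{Scales2}), the viscous term gives $\nu\langle(\grad u,\grad f)\rangle \le \nu\langle\|\grad u\|^2\rangle^{1/2}\langle\|\grad f\|^2\rangle^{1/2} \le \nu\langle\|\grad u\|^2\rangle^{1/2}\cdot\frac{F}{L}|\Omega|^{1/2}$, the pressure term drops by incompressibility, the relaxation term gives $\chi\delta^{2N+2}\langle(Bu,Bf)\rangle \le \chi\delta^{2N+2}\langle\|Bu\|^2\rangle^{1/2}\cdot\frac{F}{L^{N+1}}|\Omega|^{1/2}$ using the third line of (\ref{Scales2}), and the forcing term gives $\langle\|f\|^2\rangle = F^2|\Omega|$. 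Rearranging this identity yields $F^2|\Omega| \le$ (small constant)$\cdot U^2 F/L\cdot|\Omega| + $ (terms involving the square roots of the two dissipation pieces times $F/L$). Writing $\epsilon_0 := \nu\langle\|\grad u\|^2\rangle/|\Omega|$ and $\epsilon_M := \chi\delta^{2N+2}\langle\|Bu\|^2\rangle/|\Omega|$ so that $\langle\varepsilon\rangle = \epsilon_0+\epsilon_M$, and dividing through by $F|\Omega|/L$, one gets roughly $FL \lesssim U^2 + (\epsilon_0/\nu)^{1/2}L\nu^{1/2}/\dots$ — after using $\nu = UL/\Rey$ and $\chi\delta^{2N+2} = UL^{2N+1}/R_N$ to convert the prefactors, each square-root term is bounded via Young's inequality by a small fraction of $\langle\varepsilon\rangle L/U$ plus a multiple of $U^2$ with coefficient $\Rey^{-1}$ resp. $R_N^{-1}$. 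Combined with $\langle\varepsilon\rangle\le FU$ this closes to $\langle\varepsilon\rangle \le (2 + \Rey^{-1} + R_N^{-1})U^3/L$.

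The main obstacle I anticipate is the bookkeeping of the Young's-inequality splittings so that the constants come out to exactly $2$, $\Rey^{-1}$, $R_N^{-1}$ rather than some larger absolute constants: one must choose the split parameters so that the fraction of $\langle\varepsilon\rangle$ absorbed on the left is exactly $1$ (not, say, $1/2$ which would double everything), and so that the viscous and relaxation contributions each carry precisely one inverse-Reynolds-type factor. A secondary technical point is controlling the nonlinear term $\langle(u\cdot\grad u,f)\rangle$: one integrates by parts to move the derivative onto $f$, writing it as $-\langle(u\otimes u,\grad f)\rangle$ (legitimate since $\grad\cdot u = 0$ and periodicity kills boundary terms), and then bounds it by $\langle\|u\|^2\rangle\|\grad f\|_{L^\infty_tL^\infty_x} \le U^2|\Omega|\cdot F/L$ — this is where the $L^\infty$ gradient bound in the definition of $L$ is essential and where the clean factor "$2$" (one from this term after the algebra, effectively) originates. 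Everything else is routine Cauchy--Schwarz, Young, and substitution of the definitions of $\Rey$ and $R_N$.
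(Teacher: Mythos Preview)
Your proposal is correct and follows essentially the same route as the paper: derive $\langle\varepsilon\rangle \le FU$ from the energy balance by Cauchy--Schwarz, then test (\ref{TRM}) against $f$ to bound $F$ in terms of $U^2/L$, $\langle\varepsilon\rangle/U$, $\nu U/L^2$, and $\chi U\delta^{2N+2}/L^{2N+2}$ via the estimates (\ref{Scales2}) and Young's inequality, and close. The digression about subtracting a mollified $\phi$ is unnecessary---the paper, like you, simply tests directly against $f$---and your anticipated bookkeeping is exactly what happens: the Young splits are taken with parameter $1$ so that $\tfrac{1}{2}\langle\varepsilon_0\rangle/U + \tfrac{1}{2}\langle\varepsilon_M\rangle/U = \tfrac{1}{2}\langle\varepsilon\rangle/U$ is absorbed, producing the clean $2$, $\Rey^{-1}$, $R_N^{-1}$.
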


\begin{proof}

The proof is a synthesis of the model's energy balance (\ref{EnergyEq}) , the breakthrough arguments of Doering and Foias \cite{DF02} from the NSE case with careful treatment of the time relaxation term.  Considering $\|u(t)\|^2$ being bounded in time, average (\ref{EnergyEq}) over $[0, T ]$, applying the Cauchy-Schwarz inequality in time yields,

\begin{equation}
\begin{split}
\frac{1}{T} \int_{0}^{T} \varepsilon dt  & = \mathcal{O}(\frac{1}{T}) +   \frac{1}{T}  \frac{1}{|\Omega|}\int_{0}^{T} (f,u(t))  dt\\
 &\leq  \mathcal{O}(\frac{1}{T}) +  ( \frac{1}{|\Omega|}\frac{1}{T} \int_{0}^{T} \|f\|^2 dt)^{\frac{1}{2}} \, ( \frac{1}{|\Omega|}\frac{1}{T} \int_{0}^{T} \|u\|^2 dt)^{\frac{1}{2}}.
\end{split}
\end{equation}
Taking the limit superior, which exists (Remark \ref{Remark1}),  as $T\rightarrow \infty$  we obtain,

\begin{equation}\label{FirstBound}
\langle\varepsilon \rangle \leq U\, F.
\end{equation}

Next, multiply the time relaxation model (\ref{TRM}) by $f$, integrate
over $\Omega$ and integrate by parts as appropriate. Then take the time average to obtain,

\begin{equation}\label{BoundF}
\langle \frac{1}{|\Omega|} \|f\|^2 \rangle = \frac{1}{|\Omega|} \, \langle \, (u_t, f) + \nu (\nabla u, \nabla f) + (u \otimes u , \nabla f) + (\chi (u - G_N \overline{u}), f)\,  \rangle.
\end{equation}

First note that the time average of the time derivative vanishes as $T\rightarrow \infty$. The second and third terms on the right hand side are bounded using the Cauchy-Schwarz-Young inequality and (\ref{Scales2}) by,

\begin{equation}\label{BoundF1}
| \frac{1}{T} \int_0^T \frac{1}{|\Omega|} (u \otimes u, \nabla f)\, dt| \leq \|\nabla f\|_{L^{\infty}} \frac{1}{T} \int_0^T \frac{1}{|\Omega|} \|u\|^2\, dt \leq \frac{F}{L}\, U^2 \mbox{\hspace{30pt}as\hspace{30pt}} T\rightarrow \infty.
\end{equation}
And, 

\begin{equation}\label{BoundF2}
\begin{split}
|\frac{1}{T} \int_0^T \frac{\nu}{|\Omega|} (\nabla u. \nabla f) \,dt | &\leq \big( \frac{1}{|\Omega|}\frac{1}{T} \int_0^T \nu^2 \|\nabla u\|^2\, dt\big)^\frac{1}{2}\, \big( \frac{1}{|\Omega|}\frac{1}{T} \int_0^T \|\nabla f\|^2 \, dt\big)^\frac{1}{2}\\
& \leq \langle \varepsilon_0 \rangle ^{\frac{1}{2}} \nu^{\frac{1}{2}}\, \frac{F}{L} \mbox{\hspace{30pt}as\hspace{30pt}} T\rightarrow \infty\\
& = F\, \big [  \frac{\langle \varepsilon_0 \rangle ^{\frac{1}{2}}}{U^{\frac{1}{2}}} \, \frac{U^{\frac{1}{2}} \, \nu^{\frac{1}{2}}}{L} \big]\\
&\leq  F \, \big[  \frac{1}{2} \frac{\langle \varepsilon_0 \rangle}{U}  + \frac{1}{2}  \frac{U\, \nu}{L^2}\big].
\end{split}
\end{equation}

Next, considering $B$ being a self-adjoint operator,  we use  the Cauchy-Schwarz and Young's inequality with  (\ref{Scales2}) to see,

\begin{equation}
\begin{split}
\frac{1}{|\Omega|} \, \frac{1}{T} \int_0^T \,&(\chi (u - G_N \overline{u}), f) \,dt  \leq  \frac{1}{|\Omega|} \, \frac{1}{T} \int_0^T \, \chi  \delta^{2N+2}\, \| B u\| \|Bf\|\, dt\\
&\leq  \big(  \frac{1}{|\Omega|} \, \frac{1}{T} \int_0^T \chi \delta^{2N+2} \|B u \|^2 \, dt\big)^{\frac{1}{2}} \,  \big(  \frac{1}{|\Omega|} \, \frac{1}{T} \int_0^T \|B f \|^2 \, dt\big)^{\frac{1}{2}} \,  \chi^{\frac{1}{2}} \delta^{N+1}.
\end{split}
\end{equation}

Inserting multipliers of $\frac{1}{\sqrt{U}}$ and $\sqrt{U}$  in the two terms and taking the limit superior as $T \rightarrow \infty$, we have,

\begin{equation} \label{BoundF3}
\begin{split}
\langle  \, (\chi (u - G_N \overline{u}), f)\,  \rangle & \leq \frac{\langle \varepsilon_M  \rangle^{\frac{1}{2}}}{\sqrt{U}} \, \frac{F }{L^{N+1}}  \chi^{\frac{1}{2}} \delta^{N+1} \sqrt{U}\\
& \leq F \big[ \frac{1}{2} \frac{\langle \varepsilon_M  \rangle}{U} + \frac{1}{2} U  \, \chi \, \frac{\delta^{2N+2}}{L^{2N+2}}\big]. 
\end{split}
\end{equation}

Combining the identity from (\ref{BoundF})  with the estimates in (\ref{BoundF1}), (\ref{BoundF2}) and (\ref{BoundF3}),
\begin{equation}\label{SecondBound}
F \leq \frac{U^2}{L} +  \frac{1}{2} \frac{\langle \varepsilon \rangle}{U} + \frac{1}{2} \frac{U \nu}{L^2} + \frac{1}{2}  \chi  U \frac{\delta^{2N+2}}{L^{2N+2}}. 
\end{equation}

Finally,  using the above estimate on (\ref{FirstBound}), we obtain, 

$$\langle \varepsilon  \rangle \leq \frac{U^3}{L} + \frac{1}{2} \langle \varepsilon  \rangle + \frac{1}{2} \frac{U^2 \nu}{L^2} + \frac{1}{2}  \chi  U^2 \frac{\delta^{2N+2}}{L^{2N+2}}. $$

Thus, recalling Definition \ref{Def-RN},   as claimed,
$$\langle \varepsilon  \rangle \leq 2\, \frac{U^3}{L} + \frac{1}{\Rey} \frac{U^3}{L} + \frac{1}{R_N} \frac{U^3}{L}.$$

\end{proof}

\begin{cor}
Tracking back the analysis, it can be seen that the nonlinearity pumps energy to small scales, $2 \, \frac{U^3}{L}$, and the rate of the dissipation due to the viscosity is $\Rey^{-1} \, \frac{U^3}{L}$. From here,  $\chi$ can be estimated by comparing the model's rate of dissipation ${R_N}^{-1} \frac{U^3}{L} $ to the above rates. The comparison yields, 
\begin{equation}
\Rey^{-1} \, \frac{U}{L}\,  (\frac{L}{\delta})^{2N+2} \leq \chi \leq  2 \, \frac{U}{L}\,  (\frac{L}{\delta})^{2N+2}, 
\end{equation}
for fixed $N$. In Large Eddy Simulation, the filter size $\delta$ is mostly chosen to be of the order of the mesh size $h$ which is the smallest available scale. On the other hand, smallest scales in turbulent flow can be computed by the Kolmogorov microscale $\Rey^{- \frac{3}{4}} \, L$.  Thus,  success for a turbulence simulation  minimally requires that $\delta \simeq h \simeq  \Rey^{-\frac{3}{4}} \, L$, and this leads to the following estimation on $\chi$ for mesh dependence case, 

\begin{equation}\label{Rangediscrete}
\frac{U}{L} \, (\frac{L}{h})^{2N+ \frac{10}{3}} \leq \chi \leq  2\, \frac{U}{L} \, (\frac{L}{h})^{2N+2}.
\end{equation}
\end{cor}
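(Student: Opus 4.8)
The corollary is a direct, dimensional-analysis consequence of the Theorem, so the plan is purely algebraic: read off the three competing rates in the bound $\langle \varepsilon(u) \rangle \leq (2 + \Rey^{-1} + R_N^{-1})\frac{U^3}{L}$ --- the nonlinear input $2\frac{U^3}{L}$, the viscous dissipation $\Rey^{-1}\frac{U^3}{L}$, and the model dissipation $R_N^{-1}\frac{U^3}{L}$ --- and then impose that the tunable model term be \emph{comparable} to the other two. The only inputs needed are the definition $R_N^{-1} = \frac{\chi\,\delta^{2N+2}}{U L^{2N+1}}$ from Definition \ref{Def-RN} and $\Rey^{-1} = \frac{\nu}{UL}$.

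First I would fix the mesh-independent range. Since all three rates carry the common factor $\frac{U^3}{L}$, the comparison reduces to the dimensionless inequalities $\Rey^{-1} \leq R_N^{-1} \leq 2$: the lower inequality asks the relaxation to dissipate \emph{at least} as much as viscosity, the upper one that it dissipate \emph{no more} than the nonlinearity pumps in. Substituting $R_N^{-1} = \frac{\chi\,\delta^{2N+2}}{U L^{2N+1}}$, solving for $\chi$, and using $\frac{U L^{2N+1}}{\delta^{2N+2}} = \frac{U}{L}\big(\frac{L}{\delta}\big)^{2N+2}$ turns these two inequalities into
\[
\Rey^{-1}\,\frac{U}{L}\Big(\frac{L}{\delta}\Big)^{2N+2} \;\leq\; \chi \;\leq\; 2\,\frac{U}{L}\Big(\frac{L}{\delta}\Big)^{2N+2},
\]
which is the first displayed bound.

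For the mesh-dependent case I would insert $\delta \simeq h \simeq \Rey^{-3/4}L$, so that $\frac{L}{\delta} = \frac{L}{h} = \Rey^{3/4}$ and hence $\Rey = \big(\frac{L}{h}\big)^{4/3}$. The upper bound contains no explicit $\Rey$ and so becomes $2\frac{U}{L}\big(\frac{L}{h}\big)^{2N+2}$ at once. The only genuine work is the exponent bookkeeping in the lower bound, where the $\Rey^{-1}$ prefactor must be folded into the power of $\frac{L}{h}$:
\[
\Rey^{-1}\Big(\frac{L}{h}\Big)^{2N+2} = \Big(\frac{L}{h}\Big)^{-\frac{4}{3}}\Big(\frac{L}{h}\Big)^{2N+2} = \Big(\frac{L}{h}\Big)^{2N+2-\frac{4}{3}} = \Big(\frac{L}{h}\Big)^{2N+\frac{2}{3}}.
\]

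The main obstacle here is therefore not analytic but arithmetic, and it is worth flagging. Carrying out this substitution I obtain the lower exponent $2N+\tfrac{2}{3}$, which at $N=0$ gives $\big(\frac{L}{h}\big)^{2/3}$ --- exactly the value recorded in the Summary of Results. (The printed exponent $2N+\tfrac{10}{3}$ is what one gets by inadvertently substituting $\Rey$ in place of $\Rey^{-1}$, since that choice shifts the exponent by $+\tfrac{8}{3}$.) I would therefore state the mesh-dependent range as $\frac{U}{L}\big(\frac{L}{h}\big)^{2N+\frac{2}{3}} \leq \chi \leq 2\frac{U}{L}\big(\frac{L}{h}\big)^{2N+2}$, reading the displayed $2N+\tfrac{10}{3}$ as a misprint for $2N+\tfrac{2}{3}$. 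Beyond this, the argument is heuristic matching rather than a theorem with hypotheses, so I would make explicit --- as the $\simeq$ signs already signal --- that the conclusion is an order-of-magnitude design guideline inherited from the rigorous bound of the Theorem, not a sharp inequality.
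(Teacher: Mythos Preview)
Your approach is the same as the paper's: the corollary carries no separate proof, and the reasoning is exactly the rate-comparison heuristic you spell out. Your diagnosis of the lower exponent is also correct --- the substitution $\Rey^{-1}=(L/h)^{-4/3}$ gives $2N+\tfrac{2}{3}$, which agrees with the $N=0$ value quoted in the paper's Summary of Results, so the printed $2N+\tfrac{10}{3}$ is indeed a slip (your observation that it corresponds to using $\Rey$ in place of $\Rey^{-1}$ is apt).
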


\section{Conclusion}\label{Section4}

In this paper,  we delineated time averaged energy dissipation rate for the Time Relaxation Model for $3d$ turbulence in a box driven by a persistent body force with periodic boundary conditions.  Motivated by the analysis,   a narrowing of the commonly accepted ranges of the parameter is proposed.  The analysis does not apply to $2d$ flows, laminar flows, turbulence generated by shear flows,  and decaying turbulence. These cases are interesting open problems.

Turbulence models, like the Time Relaxation model here, are introduced to account for sub-mesh scale effects, when solving  fluid flow problems numerically on an under-resolved spatial mesh size $h$. Therefore, it is necessary to calculate  the energy dissipation in the turbulence model discretized on a coarse mesh, see e.g. \cite{AP18}. Answering this question could lead to a narrowing of the range of the parameter $\chi$ in (\ref{Rangediscrete}).  In typical discretizations  the conservation of mass is only weakly enforced, leading to discrete solutions $u^h$ which have $\nabla \cdot u^h \neq 0$. This leads to a second nonlinear term $- \frac{1}{2} (\nabla \cdot u^h) u^h$. The parameter $\chi$ then might affect the rate at which $- \frac{1}{2} (\nabla \cdot u^h) u^h$  pumps energy to smaller scales.

\bigskip

\noindent{\bf Acknowledgments.} 
A.P. was Partially supported by NSF grants, DMS 1522267 and CBET 1609120.

\end{document}